\documentclass[11pt]{amsart}
\usepackage{mathrsfs}
\usepackage{amsmath}
\usepackage{cases}
\usepackage{amssymb}
\usepackage{graphicx}
\usepackage{color}
\usepackage{booktabs}
\usepackage{bigstrut}
\usepackage{CJK}

\begin{document}
\begin{CJK*}{GBK}{song}
\newtheorem{theorem}{Theorem}[section]
\newtheorem{prop}[theorem]{Proposition}
\newtheorem{lemma}[theorem]{Lemma}
\newtheorem{corollary}[theorem]{Corollary}
\newtheorem{defn}[theorem]{Definition}
\newtheorem{conjecture}[theorem]{Conjecture}
\newtheorem{remark}[theorem]{Remark}
\newtheorem{exe}{Exercise}

\theoremstyle{definition}%
\newtheorem{definition}{Definition}

\def\pasdegrille{\let\grille = \pasgrille}
\def\ecriture#1#2{\setbox1=\hbox{#1}
\dimen1= \wd1 \dimen2=\ht1 \dimen3=\dp1 \grille #2 \box1 }
\def\aat#1#2#3{
\divide \dimen1 by 48 \dimen3=\dimen1 \multiply \dimen1 by #1
\advance \dimen1 by -\dimen3 \divide \dimen1 by 101 \multiply
\dimen1 by 100 \divide \dimen2 by \count11 \multiply \dimen2 by #2
\setbox0=\hbox{#3}\ht0=0pt\dp0=0pt
  \rlap{\kern\dimen1 \vbox to0pt{\kern-\dimen2\box0\vss}}\dimen1= \wd1
\dimen2=\ht1}
\def\pasgrille{
\count12= \dimen1 \divide \count12 by 50 \divide \dimen2 by \count12
\count11 =\dimen2 \ \divide \dimen1 by 48
\setlength{\unitlength}{\dimen1} \smash{\rlap{\ }} \dimen1= \wd1
\dimen2=\ht1 }
\def\grille{
\count12= \dimen1 \divide \count12 by 50 \divide \dimen2 by \count12
\count11 =\dimen2 \ \divide \dimen1 by 48
\setlength{\unitlength}{\dimen1}
\smash{\rlap{\graphpaper[1](0,0)(50, \count11)}} \dimen1= \wd1
\dimen2=\ht1 }

\pasdegrille

\numberwithin{equation}{section}

\newcommand{\R}{\mathbb R}
\newcommand{\TT}{\mathbb T}
\newcommand{\Z}{\mathbb Z}
\newcommand{\N}{\mathbb N}
\newcommand{\Q}{\mathbb Q}
\newcommand{\Sol}{\operatorname{Sol}}
\newcommand{\ND}{\operatorname{ND}}
\newcommand{\ord}{\operatorname{ord}}
\newcommand{\leg}[2]{\left( \frac{#1}{#2} \right)}  
\newcommand{\Sym}{\operatorname{Sym}}
\newcommand{\vE}{\mathcal E} 
\newcommand{\ave}[1]{\left\langle#1\right\rangle} 
\newcommand{\Var}{\operatorname{Var}}
 \newcommand{\tr}{\operatorname{tr}}
\newcommand{\supp}{\operatorname{Supp}}
\newcommand{\intinf}{\int_{-\infty}^\infty}
\newcommand{\beq}{\begin{equation}}
\newcommand{\eeq}{\end{equation}}
\newcommand{\ben}{\begin{eqnarray}}
\newcommand{\een}{\end{eqnarray}}
\newcommand{\beno}{\begin{eqnarray*}}
\newcommand{\eeno}{\end{eqnarray*}}
\newcommand{\dist}{\operatorname{dist}}
\newcommand{\area}{\operatorname{area}}
\newcommand{\vol}{\operatorname{vol}}
\newcommand{\diam}{\operatorname{diam}}
\newcommand{\Sp}{\mathbb S}

\newcommand{\Bin}{\operatorname{Binom}}
\newcommand{\pois}{\operatorname{Pois}}
\newcommand{\myK}{V}
\newcommand{ \myvar}{\operatorname{Var}}
\newcommand{\ripleyK}{\^K}
\newcommand{\E}{\mathbb{E}}
\newcommand{\T}{\mathbb{T}}
\newcommand{\Prob}{\operatorname{Prob}}
\newcommand{\rta}{\rightarrow}
\def\RR{{\rm \textbf{R}}}
\def\TT{{\rm \textbf{T}}}
\def\SS{{\rm \textbf{S}}}
\def\ZZ{{\rm \textbf{Z}}}
\newcommand{\q}{\quad}
\def\d{\delta}
\def\a{\alpha}
\def\e{\varepsilon}
\def\ld{\lambda}
\def\p{\partial}
\def\v{\varphi}
\def\D{\Delta}
\renewcommand{\^}{\widehat}
\newcommand{\bx}{\mathbf x}
\newcommand{\by}{\mathbf y}
\newcommand{\bz}{\mathbf z}
\newcommand{\bl}{\mathbf \lambda}
\newcommand{\SF}{\mathcal H}
\newcommand{\V}{\vskip0.2cm}

\title[Maximal spherical means]{On local smoothing problems and Stein's maximal spherical means}
\author[Miao et al]{Changxing Miao, Jianwei Yang and Jiqiang Zheng}

\date{\today}

\address{Institute of Applied Physics and Computational Mathematics, Beijing 100088, China}
\email{miao\_changxing@aliyun.com ,  miao\_{}changxing@iapcm.ac.cn}

\address{LAGA(UMR 7539), Institut Galil\'ee, Universit\'e Paris 13, Sorbonne Paris Cit\'e, France\\
Beijing International Center for Mathematical Research, Peking University, Beijing 100871, China} \email{geewey\_{}young@pku.edu.cn}

\address{Universit\'e de Nice - Sophia Antipolis,
Laboratoire J. A. Dieudonn\'e, 06108 Nice Cedex 02, France}
\email{zhengjiqiang@gmail.com}

\subjclass[2000]{42B25, 42B20.}
\keywords{Maximal spherical means, local smoothing, wave equation, oscillatory integral.}

\begin{abstract}
It is proved that the local smoothing conjecture for wave equations implies certain
improvements on Stein's analytic family of maximal spherical means.
Some related problems are also discussed.
\end{abstract}

\maketitle

\section{Introduction}\label{sect:introd}
For $\a>0$, we let
$m_\a(x)=\Gamma(\a)^{-1}(1-|x|^2)^{\a-1}_+$
where $x\in \R^n$, $\Gamma(\a)$ is the Gamma function and
$r_+$ is a homogeneous distribution defined to be $r$ when $r>0$, and equal to $0$ if $r\leq 0$.
Denote by $m_{\a,\,t}(x)=m_\a(x/t)\,t^{-n}$ for
$t>0$ and define
\beq\label{eq:def-max-sph}
 \mathscr{M}^\a_tf(x)=\bigl(f*m_{\a,\,t}\bigr)(x),
\eeq
initially for $f\in C^\infty_0(\R^n)$.\V

These averaging operators are defined
{\em a priori} only for real positive $\a$.
However, if we recall the Fourier transform of $m_\a$
\beq\label{eq:FT-of-m}
 \^m_{\a}(\xi)
 =\pi^{-\a+1}|\xi|^{-\frac n2-\a+1}
 \mathcal J_{\frac n2+\a-1}(2\pi|\xi|),
\eeq
where
$\mathcal J_m(x)$
is the Bessel function of order $m$
( see \cite{Stein-Weiss1971} or Appendix), and
notice that ${\rm Re}\,\a\leq 0$ is allowed in the Bessel functions in \eqref{eq:FT-of-m}
by means of analytic continuation, we can extend the notion of \eqref{eq:def-max-sph} to include complex $\a$ via Fourier transform
\beq\label{eq:def-max-sph-ft}
  \^{\mathscr{M}^\a_tf}(\xi)
 =\^m_\a(\xi t)\^f(\xi),\;
  f\in C^\infty_0(\R^n).
\eeq
It is also important to remark that $\^m_\a(0)$ is finite and $\^m_\a(\xi)$ is smooth near the origin.
For details on these standard facts, we refer to \cite{Stein1976} and Theorem 4.15, Chap IV in \cite{Stein-Weiss1971}.\V

It is not hard to see that in the sense of distribution
$$
  \lim_{\a\rta\,0+ } \frac{1}{\Gamma(\a)}\,
  t^{\a-1}_+=\d(t),
$$
where $\d(t)$ denotes the Dirac distribution at zero. In particular,
for $f\in C^\infty_0(\R^n)$, we have by the co-area formula
\beq\label{sph-0}
  \mathscr M^0_tf(x)
  =\lim_{\a\,\rta0+}\mathscr M^\a_tf(x)
  =c_n \int_{S^{n-1}}f(x+yt)\,
  d\sigma(y),
\eeq
where $c_n$ is a constant
depending only on $n$,
$S^{n-1}$ denotes the standard unit sphere in
$\R^n$ and $d\sigma$
corresponds to the normalized surface measure
induced from Lebesgue measure on $\R^n$.
In what follows, we call
\eqref{sph-0} the spherical means of $f$.\V

Let $M^\a$ be the maximal operator associated to $\mathscr M^\a_t$ defined as
\beq\label{eq:def-max-spherical}
  M^\a (f)(x)=\sup_{t>0}\bigl|\mathscr M^\a_tf(x)\bigr|\,.
\eeq
In \cite{Stein1976}, Stein proved when $n\geq 3$, one has
\beq\label{eq:stein}
  \bigl\|M^\a (f)\bigr\|_{L^p(\R^n)}\leq A_{p,\,\a}\|f\|_{L^p(\R^n)},\;
  0<A_{p,\,\a}<+\infty\,,
\eeq
under the following condition
\beq
\label{eq:stein-1}
{\rm Re}\;\a>1-n+\frac np, \quad \text{if}  \quad 1<p\leq2\,,
\eeq
or
\beq
\label{eq:stein-2}
{\rm Re}\;\a>\frac{2-n}{p}, \quad \text{if} \quad 2\leq p\leq \infty\,,
\eeq
where the two dimensional case was left open since then the
necessary condition $p>2$ for $\a=0$ eliminates the use of $L^2$ argument
based on Plancherel's theorem.
The above two \emph{admissible} relations  for $\a$ and $p$ are summarized when $n\geq 3$
in Figure 1, where the relation \eqref{eq:stein-2} corresponds to the dotted segment $OB$.
\begin{figure}[ht]
\begin{center}
$$\ecriture{\includegraphics[width=4cm]{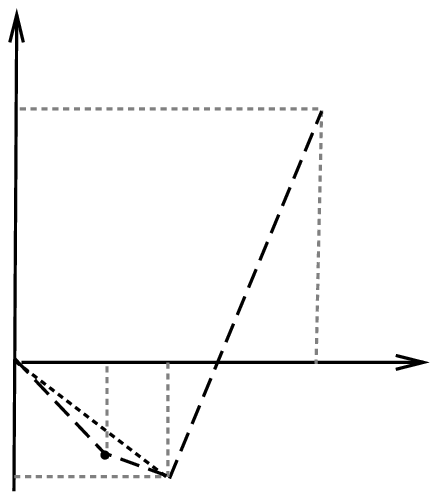}}
{\aat{-1}{55}{${\rm Re}\,\a$}
\aat{-1}{45}{$1$}
\aat{-6}{4}{$\frac {2-n}2$}
\aat{19}{20}{$\frac{1}{2}$}
\aat{6}{20}{$\frac{n-1}{2(n+1)}$}
\aat{-1}{16}{$O$}
\aat{7}{5}{$A$}
\aat{20}{0}{$B$}
\aat{36}{10}{$1$}
\aat{43}{10}{$\frac1p$}}$$
\end{center}
\caption{}
\end{figure}
\V
The condition \eqref{eq:stein-2} is not optimal. An important problem
is how to extend this range for $\a$ and $p$, where $2< p\leq \infty$.
In this paper, we are interested in investigating on this problem and show that it is possible to get certain improvement by using the local smoothing
estimate of linear wave equations. More precisely, our main theorem reads
\begin{theorem}\label{thm:main}
Suppose $n\geq 2$ and for $p\geq 2$ denote by
\beq\label{eq:p-n}
\varepsilon(p,n)=\min\left\{\frac{n-1}{4}+\frac{n-3}{2p},\frac{n-1}{p}\right\}.
\eeq
Then, \eqref{eq:stein}
is valid for all $\a\in \mathbb C$  and $p>2$ such that
${\rm Re}\,\a>-\varepsilon(p,n)$.
\end{theorem}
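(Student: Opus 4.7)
The plan is to establish the critical case $p = p_0 := 2(n+1)/(n-1)$ using the local smoothing conjecture, and then extend to all $p > 2$ via Stein's interpolation for analytic families, anchored by the two endpoint estimates: Stein's original $L^2$-bound at $p = 2$ (valid for $\operatorname{Re}\a > -(n-2)/2$) and the trivial $L^\infty$-bound at $p = \infty$ (valid for $\operatorname{Re}\a > 0$). For the critical case, I exploit the link between $\mathscr M^\a_t$ and the half-wave propagator $e^{\pm it\sqrt{-\D}}$ provided by the stationary-phase asymptotics of $\mathcal J_{n/2+\a-1}$ in \eqref{eq:FT-of-m}. A radial Littlewood-Paley decomposition $\^m_\a = \chi_0\^m_\a + \sum_{k\geq 1}\psi_k\^m_\a$, with $\psi_k$ supported in $|\xi|\sim 2^k$ and $P_k$ the associated projection, separates the low-frequency piece (dominated by the Hardy-Littlewood maximal operator) from the high-frequency pieces, which satisfy, modulo lower-order symbols,
\[
\mathscr M^{\a, k}_t f \;\approx\; c_\pm(\a)\, 2^{-k((n-1)/2 + \operatorname{Re}\a)}\, e^{\pm it\sqrt{-\D}} P_k f.
\]
A dyadic decomposition in $t$ together with $t$-scaling reduces the task to proving $\|\sup_{t\in[1,2]}|\mathscr M^{\a, k}_t f|\|_{L^{p_0}_x} \leq C 2^{-k\d}\|P_k f\|_{L^{p_0}}$ for some $\d > 0$.

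The key device is the fundamental-theorem-of-calculus maximal estimate
\[
\bigl\|\sup_{t\in[1,2]}|F(t,\cdot)|\bigr\|_{L^{p_0}_x}^{p_0} \;\leq\; \bigl\|F(1,\cdot)\bigr\|_{L^{p_0}_x}^{p_0} + p_0\|F\|_{L^{p_0}_{t,x}}^{p_0-1}\|\p_t F\|_{L^{p_0}_{t,x}},
\]
applied with $F = \mathscr M^{\a, k}_t f$. The first term is controlled by the classical fixed-time Sobolev bound $\|e^{\pm it|D|} P_k f\|_{L^{p_0}_x} \leq C 2^{k(n-1)(1/2 - 1/p_0)}\|P_k f\|_{L^{p_0}}$; the cross term is controlled by the local smoothing conjecture
\[
\bigl\|e^{\pm it|D|} P_k f\bigr\|_{L^{p_0}_{t,x}([1,2]\times\R^n)} \;\leq\; C_\e\, 2^{k((n-1)/(2(n+1)) + \e)} \|P_k f\|_{L^{p_0}},
\]
combined with the fact that $\p_t$ costs only $2^k$ on frequency-$2^k$ functions. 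A direct bookkeeping shows that both contributions are summable in $k$ precisely when $\operatorname{Re}\a > -(n-1)/p_0 = -(n-1)^2/(2(n+1))$, establishing the critical $L^{p_0}$-bound.

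With the critical bound in hand, apply Stein's interpolation for analytic families to the linearized maximal operator $T^\a f(x) := \mathscr M^\a_{t(x)} f(x)$ (for arbitrary measurable $t(\cdot) > 0$): interpolating with Stein's $L^2$-bound yields, for $2 < p < p_0$, the condition $\operatorname{Re}\a > -(n-1)/4 - (n-3)/(2p)$, which is the second branch of $\varepsilon(p,n)$; interpolating with the trivial $L^\infty$-bound yields, for $p_0 < p \leq \infty$, the condition $\operatorname{Re}\a > -(n-1)/p$, the first branch. Taking the supremum over measurable $t(\cdot)$ recovers the bound for the maximal operator $M^\a$. The main obstacle is the sharp $L^{p_0}$-estimate: one must ensure that the Sobolev cost of the time derivative is exactly balanced by the $1/p_0$ gain afforded by the local smoothing conjecture, and that the polynomial control on $c_\pm(\a)$ in $|\operatorname{Im}\a|$ (standard from the Bessel asymptotics) is adequate for Stein's interpolation.
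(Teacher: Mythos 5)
Your proposal is correct in its main ideas, but it reorganizes the argument in a genuinely different way from the paper. The paper does not work only at the critical exponent $p_0=\frac{2(n+1)}{n-1}$ and then interpolate in $(p,\a)$; instead it first interpolates the local smoothing estimate itself to obtain the bound \eqref{LSC-2} with exponent $\gamma(p,n)$ at every $p>2$, and then runs the maximal-function argument directly at that fixed $p$ and fixed $\a$. Concretely, the paper controls $\sup_{t\in[1,2]}$ via the Sobolev embedding $W^{\beta,p}([1,2])\hookrightarrow L^\infty$ with $\beta>1/p$ and a carefully constructed fractional derivative $|\p_t|^\beta$ applied to $\chi(t)\mathscr M^\a_t(\D_jf)$, and then uses Schl\"afli's integral representation $\mathcal J_\nu=\tilde J_\nu-E_\nu$ to isolate, after a space-time Fourier truncation in $\tau$, the oscillatory part that becomes the half-wave propagator $e^{\pm is\sqrt{-\D}}$ together with a superexponentially small remainder. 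Your proposal replaces the Sobolev/fractional-derivative mechanism with the elementary fundamental-theorem-of-calculus bound on $\sup_t$, and replaces Schl\"afli's representation with the standard large-argument Bessel asymptotics, and replaces the paper's ``interpolate the local smoothing estimate'' step with ``apply Stein's analytic-family interpolation to the linearized maximal operator $T^\a f(x)=\mathscr M^\a_{t(x)}f(x)$ with the $L^2$ and $L^\infty$ endpoints.'' Both routes yield the condition $\operatorname{Re}\a>-\varepsilon(p,n)$, and I checked that your interpolation bookkeeping at $p_0$ and the two resulting branches agree with \eqref{eq:p-n}. What each buys: your route is conceptually cleaner (the critical exponent $p_0$ is the only place where the full strength of local smoothing is invoked, and the FTC maximal inequality is more elementary than the fractional $|\p_t|^\beta$ formalism), but it incurs the extra burden of verifying the growth hypotheses for Stein's interpolation, in particular that $\Gamma(\a)^{-1}$ and the Bessel-dependent constants grow at most like $e^{c|\operatorname{Im}\a|}$ with $c<\pi$, which you flag but do not carry out; the paper's route is more technically involved per $p$, but each $p$ and each $\a$ is handled at fixed complex parameter, so no analytic-family interpolation and no $\operatorname{Im}\a$-growth bookkeeping is needed. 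You also leave the dyadic $t$-reduction (the paper's Step 4, which requires a vector-valued square-function argument to sum the scales and not just a naive triangle inequality) and the low-frequency/error-term estimates as one-line remarks; these are standard but they are where the paper spends a good part of its effort.
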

\begin{remark}
\label{rmk:improve}
Compared to \eqref{eq:stein-2}, there is a $1/p-$downwards extension for the range of ${\rm Re}\, \a$
so that \eqref{eq:stein} is valid for $p\geq p_{n}$ where
$\displaystyle p_{n}=2\frac{n+1}{n-1}$, and there is a $\displaystyle \frac{n-1}{2}\left(\frac12-\frac1p\right)-$improvement for $2<p<p_{n}$.
This is indicated in Figure 1 by the dashed line segments $OA$ and $AB$.
\end{remark}

The proof of Theorem \ref{thm:main} relies on
the recent results concerning Sogge's
local smoothing conjecture for wave equations.
Let $u(t,x)$
be the solution to the Cauchy problem
\begin{equation} \label{eq:1}
\left\{ \begin{aligned}
         (\partial^2_t-\Delta )& u(x,t)= 0,\,(x,t)\in\R^n\times\R\,,\\
                 u(x,0) &=f(x)\,,\;
                  \partial_tu(x,0)=g(x)\,.\\
                          \end{aligned} \right.
                          \end{equation}
It is conjectured in \cite{Sogge1991}
that for $n\geq 2$ and $\displaystyle p\geq\frac{2n}{n-1}$, one has
\beq\label{LSC}
  \|u\|_{L^p(\R^n\times [1,2])}\leq C\bigl(\|f\|_{W^{\gamma,\,p}(\R^n)}+\|g\|_{W^{\gamma-1,\,p}(\R^n)}\bigr)\,,\,
  \gamma>\frac{n-1}2-\frac np\,,
\eeq
where $W^{\gamma,\,p}$ represents the usual inhomogeneous Sobolev space.
Indeed, the reader will realize that
what we proved below
is nothing but the fact that \eqref{LSC} implies the above downwards extension of the range of $\a $,
and Theorem \ref{thm:main} follows simply
from the best knowledge on $p$ for which
\eqref{LSC} is true.
\V

Some pioneer results on \eqref{LSC}
with loss of derivatives appeared chronologically in\,\cite{Sogge1991,MoSSI,MoSSII,Sogge1993}.
Their arguments involve mainly, among other things, orthogonality, square-function estimates and certain
variable coefficient versions of Kakeya-Nykodim type maximal inequalities, as can be found in \cite{Sogge1993}
and  Chapter X in \cite{Stein1993}.
\V

In \cite{Wolff2000b}, by proving a sharp decoupling
inequality, Wolff first obtained  \eqref{LSC} in dimension two
for all $ p>74$.
The higher dimensional counterpart with $n\geq 3$ was
established in \cite{LabaWolff2002} for
$\displaystyle p>\min\left\{2+\frac8{n-3}\,,\,2+\frac{32}{3n-7}\right\}$, which was improved later
in \cite{Garrigos-Schlag-Seeger} to
$\displaystyle p>2+\frac8{n-2}\cdot\frac{2n+1}{2(n+1)}$ .
The best result is obtained in a very recent work \cite{BourgainDemeter},  where
Bourgain and Demeter proved \eqref{LSC} is true for
\[p\geq \frac{2(n+1)}{n-1}\,,\; n\geq 2.\]
Their argument is based on the techniques
developed in a series of works on the decoupling inequalities. We refer to \cite{BourgainDemeter}
for more references and comments on this issue.
\V

If we use interpolation, we have
\beq\label{LSC-2}
  \|u\|_{L^p(\R^n\times [1,2])}\leq C\|f\|_{W^{\gamma-1,\,p}(\R^n)}\,,
\eeq
for $2<p\leq\infty$ and $\gamma>\gamma(p,n)$, where
\begin{equation}
\label{range}
 \gamma(p,n)=\max\left\{\frac{n-1}{2}\left(\frac{1}2-\frac1p\right)\,,\,\frac{n-1}{2}-\frac{n}{p}\right\}\,.
 \end{equation}
It is \eqref{LSC-2} and \eqref{range} that corresponds to the
improvement upon the relation of ${\rm Re}\,\a$ and $p$
obtained in Theorem \ref{thm:main}.

\begin{remark}
When $p<\frac{2n}{n-1}$, there is no additional $1/p$ local smoothing for wave equations,
nor for more general Fourier integral operators satisfying cinematic curvature conditions, as pointed out in \cite{Sogge1993}.
\end{remark}

Let us turn back to the spherical means and give
some historical remarks.
In the two dimensional case when $\a=0$, the $L^p-$boundedness
of the circular maximal operator \eqref{eq:def-max-spherical}
for $p>2$ was, ten years around after \cite{Stein1976},
finally established by Bourgain \cite{Bourgain1985}.
Later, this result concerning circular maximal means was generalized to planar convex curves in \cite{Bourgain1986c}.
This result was extended to the variable coefficient version by Sogge in \cite{Sogge1991},
where it is shown that the translation invariance property of the curves are not essential.
However, to ensure a ``dynamic" condition, an assumption on {\em cinematic curvature} condition is required.
See \cite{Sogge1991,Sogge1993} and \cite{Stein1993} for more details.
Further more, results of this kind are extended to some possible $L^q-L^p$ estimate for certain $q>p$
by Schlag \cite{Schlag1997} using Kolasa-Wolff's geometric/combinatorial method \cite{Kolasa-Wolff1999}.
This result was recovered and generalized to the higher dimensional counterparts as well as to its variable coefficient cases
by Schlag-Sogge in \cite{Schlag-Sogge1997} using partial local smoothing estimates.\V

If one removes the restriction on $\a= 0$ and allows $\a$ to take complex values,
\eqref{eq:stein} was strengthened when $n=2$ by  Mockenhaupt, Seeger and Sogge
in \cite{MoSSI} for all ${\rm Re}\,\a>-\e(p)/2$, which extends \eqref{eq:stein-2} for
\begin{equation} \label{eq:11}
\e(p)=\left\{ \begin{aligned}
          &\frac12-\frac1p\;, \quad2<p<4\,; \\
           &\frac1p\; \quad\quad,\quad 4\leq p<\infty.
                          \end{aligned} \right.
                          \end{equation}
As far as we know, this is
the first work connecting the maximal circular means with the local smoothing
problems of wave equations, or more generally, Fourier integral operators satisfying
Sogge's {\em  cinematic curvature} condition.\V

Around the maximal operator \eqref{eq:def-max-spherical}, there are two possible ways of doing extensions.
One is to consider the variable coefficient version for \eqref{eq:stein} with $\a=0$.
These results are investigated
in \cite{Sogge1993}, including both $n\geq 3$ and $n=2$, where the two cases are treated
separately by means of Fourier integral operators.
Another direction is to extend the admissible relation for $\a$ and $p$ in \eqref{eq:stein-2} for
\eqref{eq:stein} as far as possible.
Thus, it follows a natural question whether one may
combine the two issues together by
extending \eqref{eq:stein} to the non-translation-invariance setting with $\alpha\neq 0$.\V

Before ending up this section, let us state
an application of Theorem \ref{thm:main} to
the wave equations.

\begin{corollary}
Suppose $n\geq 4$ and let $u(x,t)$ be the solution of the problem
\eqref{eq:1} with $f(x)\equiv 0$.
If  $\displaystyle p\in\left(\frac{2n}{n+1},\frac{2(n-1)}{n-3}\right)$ and $g\in L^{p}(\mathbb R^{n})$, then we have
\beq\label{eq:p-w}
  \lim_{t\rta 0}\frac{u(x,t)}{t}=g(x)\,,
\eeq
for almost every $x\in\R^n$.
\end{corollary}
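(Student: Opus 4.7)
The plan is to realize $u(x,t)/t$ as one of Stein's analytic spherical means $\mathscr M^\alpha_t g$, invoke the $L^p$ boundedness of the associated maximal operator, and then pass to almost everywhere convergence by the standard Stein--Calder\'on density argument. Since $f\equiv 0$, the solution of \eqref{eq:1} satisfies $\widehat{u(\cdot,t)/t}(\xi) = \sin(2\pi t|\xi|)/(2\pi t|\xi|)\,\hat g(\xi)$. On the other hand, specializing \eqref{eq:FT-of-m} to $\alpha_0:=(3-n)/2$ and using $\mathcal J_{1/2}(r) = \sqrt{2/(\pi r)}\,\sin r$ shows $\hat m_{\alpha_0}(t\xi) = c_n\,\sin(2\pi t|\xi|)/(t|\xi|)$ for some non-zero constant $c_n$. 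Hence $u(x,t)/t$ equals a constant multiple of $\mathscr M^{\alpha_0}_t g(x)$, and the corollary reduces to showing $\mathscr M^{\alpha_0}_t g \to c\,g$ almost everywhere as $t\to 0^+$ for every $g\in L^p$.

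To control the maximal operator $M^{\alpha_0}$ I would split into two ranges. For $p>2$, Theorem \ref{thm:main} applies: the requirement $\operatorname{Re}\alpha_0 > -\varepsilon(p,n)$ becomes, after noting that the binding branch of the minimum in \eqref{eq:p-n} is $(n-1)/p$, the inequality $(n-1)/p > (n-3)/2$, equivalently $p < 2(n-1)/(n-3)$. For $1<p\le 2$, Stein's original estimate \eqref{eq:stein-1} applies: $\operatorname{Re}\alpha_0 > 1-n+n/p$ rearranges to $p > 2n/(n+1)$. Gluing the two pieces at $p=2$ yields $\|M^{\alpha_0}g\|_{L^p} \le C_p\|g\|_{L^p}$ on precisely the interval $\bigl(\tfrac{2n}{n+1},\,\tfrac{2(n-1)}{n-3}\bigr)$ asserted in the corollary.

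For the pointwise limit itself, on the dense subspace $C^\infty_0(\mathbb R^n)\subset L^p$ one has $u(x,t) = tg(x)+O(t^2)$ by smoothness of the solution (equivalently, dominated convergence on the Fourier side, since $\sin(2\pi t|\xi|)/(2\pi t|\xi|)\to 1$), so $u(x,t)/t\to g(x)$ uniformly. For arbitrary $g\in L^p$ decompose $g=g_1+g_2$ with $g_1\in C_0^\infty$ and $\|g_2\|_p$ as small as desired; then $\limsup_{t\to 0^+}|u(x,t)/t - g(x)|$ is pointwise dominated by $CM^{\alpha_0}g_2(x) + |g_2(x)|$, whose $L^p$-norm is controlled by $\|g_2\|_p$ via the previous paragraph, forcing the exceptional set to have measure zero. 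The only non-routine step is the Fourier identification, which fixes the exponent $\alpha_0=(3-n)/2$ and thereby pins down both endpoints of the $p$-interval in the statement; everything else is bookkeeping and a textbook density argument.
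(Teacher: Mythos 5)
Your proposal is correct and follows essentially the same route as the paper: identify $u(x,t)/t$ with a constant multiple of $\mathscr M^{\alpha_0}_t g$ via \eqref{eq:FT-of-m} and $\mathcal J_{1/2}$, bound $M^{\alpha_0}$ on $L^p$ by combining Stein's range \eqref{eq:stein-1} with Theorem~\ref{thm:main}, and then run the standard density argument. The one small difference is bookkeeping: the paper only invokes Theorem~\ref{thm:main} for $p\ge\frac{2(n+1)}{n-1}$ (where $\varepsilon(p,n)=\frac{n-1}{p}$) and fills in $(2,\frac{2(n+1)}{n-1})$ by interpolation with \eqref{eq:stein-1}, whereas you apply Theorem~\ref{thm:main} directly across all of $(2,\frac{2(n-1)}{n-3})$ — this works, but your justification is stated imprecisely: the branch $\frac{n-1}{p}$ is only the actual minimum in \eqref{eq:p-n} when $p\ge\frac{2(n+1)}{n-1}$, and for $2<p<\frac{2(n+1)}{n-1}$ one must separately check that $\frac{n-1}{4}+\frac{n-3}{2p}>\frac{n-3}{2}$ (which indeed holds, so the conclusion stands, but the sentence ``the binding branch of the minimum is $(n-1)/p$'' is not literally correct for that subinterval).
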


\begin{proof}
As in \cite{Stein1976}, if we take $\a=\frac{3-n}2$ and $c_n=\frac12\pi^{-\frac n2-\frac12}$, then
\beq\label{eq:sph-wave}
  u(x,t)=c_n t\mathscr M^\a_t(g)(x),
\eeq
solves the Cauchy problem of the wave equation \eqref{eq:1} with $f(x)\equiv0$.
We refer to a straightforward interpretation on \eqref{eq:sph-wave}  in Appendix.
As a consequence of \eqref{eq:stein-1} and Theorem \ref{thm:main}, \eqref{eq:stein} is true for
\[ p\in \Bigl(\frac{2n}{n+1},\,2\,\Bigr]\,\bigcup\,\Bigl[\,\frac{2(n+1)}{n-1},\,\frac{2(n-1)}{n-3}\Bigr)\,.\]
Hence, we have \eqref{eq:stein}  for $\displaystyle p\in \Bigl(\frac{2n}{n+1},\frac{2(n-1)}{n-3}\Bigr)$ by interpolation.
From this, we conclude \eqref{eq:p-w} for the same range of $p$.
\end{proof}

\begin{remark}
Notice that the above
almost everywhere convergence result \eqref{eq:p-w} was proved for
$g\in L^p(\R^n)$, where  $\displaystyle \frac{2n}{n+1}<p<\frac{2(n-2)}{(n-3)}$ for $n\geq 3$ in in \cite{Stein1976}. This above corollary slightly improves this result when $n\geq 4$.
\end{remark}

To end up this section, we indicate that it is commented in \cite{Stein1993} that the optimal results for
 $p> 2$ and $n\geq 2$ ``are still a mystery".
Although we can show that under the assumption
of sharp local smoothing estimate for wave equations,
the admissible range as for \eqref{eq:stein} is enlarged as in Remark \ref{rmk:improve},
we do not know whether this is already optimal or not.\V

This paper is organized as follows.
In Section 2, we prove Theorem \ref{thm:main},
where the proof is divided into four steps.
Section 3 is devoted to some remarks and comments for further study around this topic.
In Appendix, we clarify certain identities used in the introduction.
Although these are rather standard facts,
we include them for the convenience of reading.\V

\textbf{Acknowledgments:}
The authors thank the referees
 for spending their time reading and comments which improve this paper a lot.
This work was supported in part by the National Natural Science Foundation of China under grant No.11231006, and  No.11671047.   C. Miao was also supported  by Beijing Center for Mathematics and Information
Interdisciplinary Sciences.
J. Yang was supported by ERC
Advanced Grant No. 291214 BLOWDISOL .
J. Zheng was partly supported by the European
Research Council, ERC-2012-ADG, project number
320845 : Semi-Classical Analysis of Partial
Differential Equations.

\section{Proof of Theorem \ref{thm:main}}\label{sect:proof}

This section is devoted to the proof of our main theorem.
We start with an outline of the argument.
First, we prove a truncated maximal function where the supremum is taken over $t\in [1,2]$.
To dominate the supremum, we use Sobolev embedding
 $W^{\beta,p}(I)\hookrightarrow L^\infty([1,2])$
where $I$ is a suitable compact interval containing $[1,2]$ and $\beta>\frac1p$.
This enforces us to define the fractional order derivative of $\mathscr M^\a_t(f)(x)$ in $t-$variable.
For this, we will, after applying Littlewood-Paley decomposition to $f$, write
for each $j\geq 1$
$$|\p_t|^\beta\mathscr {M}^\a_t (\D_jf)(x),$$
as an integration operator, where the distributional kernel
can be represented by the difference of the two following oscillatory integrals
\begin{align*}
\mathcal I_j(x,y)=&\iint  a_{1,j}(\tau,s,\xi)e^{2\pi i\phi_1(x,y,t;,\xi,\tau,s,\theta)}d\theta ds d\tau d\xi,\\
\mathcal J_j(x,y)=&\iint a_{2,j}(\tau,s,\xi)e^{2\pi i\phi_2(x,y,t;,\xi,\tau,s,r)}dr ds d\tau d\xi,
\end{align*}
where $ a_{1,j}(\tau,s,\xi)$ and $a_{2,j}(\tau,s,\xi)$ are two  appropriate symbols and the two  phase functions read
\begin{align*}
\phi_1(x,y,t;,\xi,\tau,s,\theta)=&(x-y)\cdot\xi+(t-s)\tau+s|\xi|\sin\theta-\theta\varpi/2\pi,\\
\phi_2(x,y,t;,\xi,\tau,s,r)=&(x-y)\cdot\xi+(t-s)\tau +i(|\xi|s\sinh r+\varpi r/2\pi).
\end{align*}
Observe that $\phi_1$ has critical points in the $s-$variable only if
$\tau\approx |\xi|\sin\theta$.
This suggests us to localize the
frequency of time by means of truncating $\tau$ to the low
frequency. Combined with Littlewood-Pelay decomposition and scaling,
the temporal regularity is transferred to the spacial derivatives.
In the proof, method of stationary phase and Schl\"afli's
integral representation of Bessel functions  in \cite{Wat} : for $r\in
\R^+$ and $k>-\frac12$,
\begin{equation}
\begin{split}
\mathcal J_k( r)=&\frac1{2\pi}\int^{\pi}_{-\pi}e^{ ir\sin
\theta}e^{-i\theta k}d\theta-\frac{\sin(k\pi)}{\pi}\int_0^\infty
e^{-(r\sinh(s)+ks)}ds\\
:=&\tilde J_k(r)-E_k(r)
\end{split}
\end{equation}
 play a central role. The former eliminates the error terms in the following arguments while the latter
gives rise to the half wave operator in Step 3 so that we may involve the sharp local smoothing estimate.
From now on, we always assume ${\rm Re}\,\a<0$ since this is the interesting situation.
At the end of the proof, we will eliminate the restriction on $t\in[1,2]$ by a standard trick.
Now, let us turn to the rigorous proof.
\begin{proof}[{\bf The proof of Theorem \ref{thm:main}}]
We take a function
$\varphi\in C^\infty_0(\R)$ such that ${\rm supp}\,\varphi\subset[1/2,2]$
to form a partition of unity $\sum\varphi(2^{-j}s)=1$,
where $ s\in \R\setminus\{ 0\}$ and the summation is taken over all integers $j\in \Z$.
For $\xi\in \R^n$, we define
$$\varphi_j(\xi)=\varphi(2^{-j}|\xi|),\;\text{ for }j\geq1,$$
$$\varphi_0(\xi)=1-\sum^\infty_{j=1}\varphi_j(\xi),$$ and write
$\^{\D_j f}(\xi)=\varphi_j(\xi)\^f(\xi)$ for $j\geq 0$,
where $\D_j$ denotes the well-known Littlewood-Pelay's projector. 
To use the reproducing property, we will also use $\tilde \varphi$
to represent a smooth positive function identical to one on the support of $\varphi$ and
vanishing outside the interval $(1/4,4)$.
We then define $\tilde \D_j f(x)$ via $\^{\tilde \D_j f}(\xi)=\tilde\varphi(2^{-j}|\xi|)\^f(\xi)$.
Notice that in the following argument, we may assume $\a$ is real and $\a<0$
whereas the general case for complex $\a$ follows the same reasoning.
The proof is divided into four steps.\V


\textbf{Step 1.}
In this step, we show  the low frequency part is well behaved,
namely there is some constant $C>0$ such that
\beq\label{eq:low-freq}
 \Bigl\| \sup_{1<t<2}|\mathscr M^\a_t (\D_0f)(x)|\Bigr\|_{L^p(\R^n)}\leq C \|f\|_{L^p(\R^n)}.
\eeq
In fact, it is immediate once we have
$$
\sup_{1<t<2}|\mathscr M^\a_t (\D_0f)(x)|\leq C M_{HL}(f)(x),
$$
where $M_{HL}$ denotes the standard Hardy-Littlewood maximal function.
This is because $\^m_\a(\xi t)\varphi_0(\xi)$ is smooth and supported in $|\xi|\leq 2$.
Thus \eqref{eq:low-freq} follows immediately from the $L^p$ boundedness of $M_{HL}$ for any $p>1$.\V

\textbf{Step 2.} In this step, we single out the main contribution of
$\mathscr M^\a_t(\D_jf)(x)$ for each $j$.
Choose a smooth positive function $\chi$ identical to one in a neighborhood of $[1,2]$ and
vanishing outside $(1/2,4)$.
Taking also a smooth positive function $\chi_0(\tau)$, identical to one on the interval $[-4,4]$
and zero outside $(-8,8)$, we set $\chi_{I_j}(\tau)=\chi_0(2^{-j}\tau)$.
We consider the space-time Fourier transform of $\chi(t)\mathscr {M}^\a_t (\D_jf)(x)$ as calculated below
\begin{align*}
 &\mathcal F_{(x,t)\rta(\xi,\tau)}\Bigl(\chi(t)\mathscr {M}^\a_t (\D_jf)(\cdot)\Bigr)(\xi,\tau)\\
  =&|\xi|^{-\varpi}\varphi_j(\xi)\^f(\xi)\biggl[\int_{-\pi}^{\pi}
  \^{\chi_1}(\tau-|\xi|\sin\theta)e^{-i\theta\varpi}d\theta
  -\int^\infty_0\^{\chi_2}\bigl(\tau-i|\xi|\sinh s\bigr)e^{-\varpi s}ds\biggr],
\end{align*}
where $\varpi=\frac n2+\a-1$, and
$$\chi_1(t)=\frac1{2\pi^\a}\chi(t)t^{-\varpi},\; \chi_2(t)=\chi(t)t^{-\varpi}\frac{\sin\varpi \pi}{\pi^\a}.$$
We note that $\^\chi_2(\tau)$ can be extended to the upper half complex plane
$\{\tau\in\mathbb C:{\rm Im}\,\tau\geq0\}$ as an analytic function since $\chi_2$ is smooth and compactly supported.\V

Thus, we may define for $j\geq 1$,
\beq\label{eq:frac-t}
 |\p_t|^\beta\Bigl(\chi(t)\mathscr {M}^\a_t (\D_jf)(x)\Bigr)
 =\iint e^{2\pi i(x\cdot\xi+t\tau)}a_j(\tau,\xi)\^f(\xi)d\tau d\xi,
\eeq
and
\beq\label{eq:F-j}
 \mathscr F_j f(x,t)
 =\iint e^{2\pi i(x\cdot\xi+t\tau)}\chi_{I_j}(\tau) a_j(\tau,\xi)\^f(\xi)d\tau d\xi,
\eeq
with an amplitude $a_j(\tau,\xi)=a_{j,1}(\tau,\xi)-a_{j,2}(\tau,\xi)$, where
\begin{align*}
a_{j,1}(\tau,\xi)=&|\xi|^{-\varpi}\varphi_j(\xi)|\tau|^\beta
\int_{-\pi}^{\pi}
\^{\chi}_1(\tau-|\xi|\sin\theta)e^{-i\theta\varpi} d
\theta,\\
a_{j,2}(\tau,\xi)=&|\xi|^{-\varpi}\varphi_j(\xi)|\tau|^\beta
\int^\infty_0\^{\chi_2}\bigl(\tau-i|\xi|\sinh s\bigr)e^{-\varpi s}ds.\end{align*}
In what follows, we consider the contributions of $a_{j,1}$ and $a_{j,2}$, separately.\V

$\bullet$ \textbf{Estimation on the first error term}.
Write
\beq\label{eq:approx}
\mathscr F_{j,1}(f)(x,t)
=\iint e^{2\pi i(x\cdot\xi+t\tau)}\tilde a_{j,1}(\tau,\xi)\^f(\xi)d\tau d\xi\eeq
where
$\tilde a_{j,1}(\tau,\xi)
=\chi_{I_j}(\tau)a_{j,1}(\tau,\xi)$
and consider the error term
\begin{align*}
\mathscr R_{j,1}(f)(x,t)
:=\iint e^{2\pi i(x\cdot\xi+t\tau)}
a_{j,1}(\tau,\xi)\^f(\xi)d\tau d\xi-\mathscr F_{j,1}(f)(x,t).
\end{align*}
We have
$$\mathscr R_{j,1}(f)(x,t)=\int K_j(x,t,y)\tilde\D_j f(y) dy$$
where the distributional kernel is given by
$$
K_j(x,t,y)
=\int^{\pi}_{-\pi}\mathscr K_j(x,t,y;\theta)\,d\theta,
$$
and  $\mathscr K_j(x,t,y;\theta)$ may take the following form for any prefixed $N>0$
\begin{align*}
\mathscr K_j(x,t,y;\theta)
=e^{-i\theta\varpi}\int\bigl(1-\chi_{I_j}(\tau)\bigr)\mathscr H_j(\tau,\theta,N;x,y)\frac{|\tau|^\beta e^{2\pi it\tau}d\tau}{(1+|\tau|-2^{j+1})^{N}},
\end{align*}
provided that
$$
\mathscr H_j(\tau,\theta,N;x,y)=(1+|\tau|-2^{j+1})^N
\int\^{\chi_1}(\tau-|\xi|\sin \theta)\varphi_j(\xi)\frac{ e^{2\pi i(x-y)\cdot\xi}}{|\xi|^\varpi}d\xi.
$$

Next, from integration by parts,
we can seek a $C_N>0$ such that
\beq\label{eq:h-bdd}
|\mathscr H_j(\tau,\theta,N;x,y)|\leq C_N 2^{-j(\varpi-n)}\cdot 2^{100 j n}(1+2^j|x-y|)^{-100n}.
\eeq
Therefore, we have
\beq\label{eq:error-1}\|\mathscr R_{j,1}(f)(\cdot,t)\|_{L^p_x(\R^n)}\leq C'_N 2^{-j\frac N2}\|\tilde \D_j f\|_p,\;\text{when}\,\frac12\leq t\leq4,\eeq
for large $N$ and some $C'_N>0$.\V

$\bullet$ \textbf{Estimation on the second error term.}
 We will see
\beq\label{eq:approx}
\mathscr F_{j,2}(f)(x,t)
:=-\iint e^{2\pi i(x\cdot\xi+t\tau)}\tilde a_{j,2}(\tau,\xi)\^f(\xi)d\tau d\xi\eeq
carries the main contribution from $a_{j,2}$,
where
$\tilde a_{j,2}(\tau,\xi)
=\chi_{I_j}(\tau)a_{j,2}(\tau,\xi)$ and
$\chi_{I_j}(\tau)$ is defined the same as before.
Since the argument are very similar, we only sketch the proof below with some necessary remarks.\V

Similar to $\mathscr R_{j,1}$, there is an error term
$$
\mathscr R_{j,2}(f)(x,t)=\int_{\R^n} L_j(x,t,y)\tilde\D_j f(y)dy,
$$
with
\begin{align}\label{eq:L}
 L_j(x,t,y)=
\int^\infty_0ds&\int (1-\chi_{I_j}(\tau))|\tau|^\beta e^{2\pi i t\tau}e^{-\varpi s}d\tau\\
&\times\int_{\R^n}
\varphi_j(\xi)\^{\chi_2}\bigl(\tau-i|\xi|\sinh s\bigr)\frac{e^{2\pi i(x-y)\cdot \xi}}{|\xi|^\varpi}d\xi.
\nonumber
\end{align}
Noting that $\tau$ is restricted to $|\tau|> 2^{j+2}$,
we may insert $(1+|\tau|-2^{j+1})^{-N}$ into the integration and changing variables $\xi\rta 2^j\xi$.
Then, using integration by parts,
we can control
$$
 \^{\chi}_2\bigl(\tau-i 2^j|\xi|\sinh s\bigr)
=\int  \chi_2(r)e^{-2\pi(2^j|\xi|\sinh s)r}e^{-2\pi ir\tau}dr
$$
by
$$
\|\chi_2^{(N')}\|_{L^1}(1+|\tau|)^{-N'}e^{-\pi 2^{j-1}|\xi|\sinh s},\;\text{with}\;s>0,
$$
so that  $(1+|\tau|-2^{j+1})^N$  is absorbed by
$\^{\chi}_2(\tau-i 2^j|\xi|\sinh s)$.
Again, we also have
\beq\label{eq:error-2}
\|\mathscr R_{j,2}(f)(\cdot,t)\|_{L^p_x(\R^n)}
\leq C'_N 2^{-j\frac N2}\|\tilde \D_j f\|_p,\;
\text{when}\,\frac12\leq t\leq4\,,
\eeq
for large $N$ and some $C'_N>0$.\V

Since $\mathscr F_j (f)=\mathscr F_{j,1}(f)+\mathscr F_{j,2}(f)$, 
it remains to obtain the right estimate for the main term
$\mathscr F_j (f)$.\V

\textbf{Step 3.}
In this step, we evaluate the space-time
$L^p(\R^n\times\R)$ norm of
$\mathscr F_j(f)(x,t)$ by means of local smoothing.
Denote by $f_j(x)=f(2^{-j}x)$ and change variables $(\tau,\xi)\rta(2^j\tau,2^j\xi)$ to get
$$\mathscr F_{j}(f)(x,t)
=\iint e^{2\pi i 2^j(x\cdot\xi+t\tau)}2^j a_{j}(2^j\tau,2^j\xi)\^{f_j}(\xi)d\xi d\tau,$$
where
\beq\label{eq:changed-symbol}
2^j a_j(2^j\tau,2^j\xi)=2^{-j(\varpi-\beta)}\tilde{a}_{j}(\tau,\xi)
\eeq
$$
\tilde{ a}_j(\tau,\xi)=2^j\cdot2\pi|\xi|^{-\varpi}\varphi(\xi)|\tau|^\beta\chi_0(\tau)\int\tilde\chi(s)\mathcal J_{\varpi}(2\pi\cdot 2^js|\xi|)e^{-2\pi i2^j\tau s}ds.
$$
Using the asymptotic expansion of $\mathcal J_k(t)$ for $k>-\frac12$, we have
$$
\mathcal J_k(r)\simeq
r^{-\frac12}\Bigl[A_1(r)\cos\bigl(r-\frac{2k+1}{4}\pi\bigr)+A_2(r)\sin\bigl(r-\frac{2k+1}{4}\pi\bigr)
\Bigr]
$$
for large $r$ and
$$
A_1(r)=\sum^\infty_{\ell=0}c_1(\ell)r^{-2\ell},\;
A_2(r)=\sum^\infty_{\ell=0}c_2(\ell)r^{-2\ell-1},
$$
for some explicit coefficients $c_\sigma(\ell)$, $\sigma=1,2$.
As a consequence, we may write
$$
\tilde{a}_j(\tau,\xi)=\sqrt{2\pi}\,2^{-\frac
j2}\cdot2^j\tilde\chi_0(\tau)\sum_{\pm}\psi_{\varpi}^\pm(\xi)e^{\mp
i\frac{2\varpi+1}{4}\pi}\int \tilde{\tilde{\chi}}(s)\cdot e^{-2\pi i
2^j s(\tau\mp |\xi|)} ds\,,
$$
where
$\psi_{\varpi}^\pm(\xi)=\varphi(\xi)\mathcal A_\pm(2\pi 2^j\xi)|\xi|^{-\varpi-\frac12}$
and
\begin{equation}
\left\{ \begin{aligned}
         \tilde\chi_0(\tau)=&|\tau|^\beta\chi_0(\tau)\\
                \tilde{\tilde{\chi}}(s)=&\tilde\chi(s)s^{-\frac12}\,.
                          \end{aligned} \right.
                          \end{equation}
Here, $\mathcal A_{\pm}(\xi)=(A_1(|\xi|)\mp iA_2(|\xi|))/2$ belongs to
the classical symbol of order zero $S^0_{1,0}(\R^n)$.\V

Since the function is localized in the
high frequency, we now involve the asymptotic expansion for
Bessel functions which leads to the half wave operator.
Write
\beq\label{eq:F}
\mathscr F_{j}(f)(x,t)=\mathscr F^+_{j}(f)(x,t)+\mathscr F^-_{j}(f)(x,t),
\eeq
where
\begin{align*}
\mathscr F^\pm_{j}(f)(x,t)
&=2^{-j(\varpi-\beta+\frac12)}e^{\mp i\frac{2\varpi+1}{4}\pi}
\iint e^{2\pi i 2^j(x\cdot\xi+t\tau)}\mathscr A_{\pm}(\tau,\xi)\^{f_j}(\xi)d\xi d\tau,\\
\mathscr A_\pm(\tau,\xi)
&=2^j\tilde\chi_0(\tau)\psi_{\varpi}^\pm(\xi)\int \tilde{\tilde{\chi}}(s)\cdot e^{-2\pi i 2^j s(\tau\mp |\xi|)}ds.
\end{align*}
It is not hard to see
\begin{align*}
\mathscr F^\pm_{j}(f)(x,t)= &2^{-j(\varpi-\beta+\frac12)}e^{\mp i\frac{2\varpi+1}{4}\pi}2^j\\
&\times\iint e^{2\pi i 2^jx\cdot\xi}\psi_{\varpi}^\pm(\xi)\^{\tilde\chi_0}(2^j(s-t))
\tilde{\tilde{\chi}}(s)\^{f_j}(\xi)e^{\pm2\pi i2^j|\xi|s}d\xi ds\\
=&2^{-j(\varpi-\beta+\frac12)}e^{\mp i\frac{2\varpi+1}{4}\pi}2^j\\
&\times\iint e^{2\pi i x\cdot\xi}\psi_{\varpi}^\pm(2^{-j}\xi)\^{\tilde\chi_0}(2^j(s-t))
\tilde{\tilde{\chi}}(s)\^{f}(\xi)e^{\pm2\pi i|\xi|s}d\xi ds\\
=&2^{-j(\varpi-\beta+\frac12)}e^{\mp i\frac{2\varpi+1}{4}\pi}2^j\\
&\times\iint\tilde{\tilde{\chi}}(s)e^{\pm
is\sqrt{-\D}}\tilde\D_jf(y)\^{\tilde\chi_0}(2^j(s-t))\check{\psi}^\pm_\varpi(2^j(x-y))2^{jn}dyds.
\end{align*}
%
Applying Young's inequality to the above expressions, we have
$$\|\mathscr F^\pm_{j}(f)\|_{L^p(\R^n\times\R)}\leq 2^{-j(\varpi-\beta+\frac12)}\|\check{\psi}^\pm_\varpi\|_{L^1(\R^n)}
\|\^{\tilde\chi_0}\|_{L^1(\R)}\|\tilde{\tilde{\chi}}(s)u^\pm_j(s,y)\|_{L^p(\R^n\times
\R)},$$ where $u^\pm_j(s,y):=e^{\pm is\sqrt{-\D}}\tilde \D_jf(y)$.\V

 At this stage, we use local smoothing estimate for wave equations with $p>2$ to get
$$
\|\mathscr F^\pm_{j}(f)\|_{L^p(\R^n\times\R)}\leq C 2^{j\mu}\|f\|_{L^p(\R^n)},
$$
where  $\mu=-\varpi+\beta-\frac12+\gamma,\;\forall \,\gamma>\gamma(p,n)$.
In view of \eqref{eq:F},
we have under the same conditions
$$
\|\mathscr F_{j}(f)\|_{L^p(\R^n\times\R)}\leq C 2^{j\mu}\|f\|_{L^p(\R^n)}.
$$
Summing up all the estimates on $\mathscr F_j(f)(x,t)$ by means of Littlewood-Pelay's theory,
we have in view of \eqref{eq:error-1} and \eqref{eq:error-2}
$$\Bigl\||\p_t|^\beta\bigl(\chi(t)\mathscr M^\a_t(f)\bigr)\Bigr\|_{L^p(\R^n\times \R)}\leq C\sum_{j\geq 0}2^{\mu j}\|f\|_p+ C'_N 2^{-\frac N2}\| f\|_p,$$
for some suitable constant $C>0$. To ensure the geometric series converge, we need
$$\mu<0,\;\gamma>\gamma(p,n),$$
which in turn is equivalent to
$$\beta<\a+\frac{n-1}{2}-\gamma(p,n).$$
On the other hand, if $\beta>\frac1p$, we can use Sobolev embedding to obtain
\begin{align}\label{eq:truncated}
\Bigl\| \sup_{1<t<2}|\mathscr M^\a_t (f)(x)|&\Bigr\|_{L^p(\R^n)}\\
\nonumber\leq& C\Bigl\|\langle\p_t\rangle^\beta\bigl(\chi(t)\mathscr
M^\a_t(f)\bigr)\Bigr\|_{L^p(\R^n\times \R)} \leq C\|f\|_p,
\end{align}
where $\langle\p_t\rangle=(1-\p^2_t)^\frac12$.
The conditions for the exponents $\mu,\a,\beta$ are summarized to
\beq\label{eq:2.11*}
\a>-\varepsilon(p,n).
\eeq

\textbf{Step 4.} In this step, we reduce the general situation when $t>0$
to the particular case of $t\in[1,2]$. To achieve this, we will use Littlewood-Paley theory by writing
$$\mathscr M^\a_tf(x)=\sum^\infty_{j=0}\mathscr M^\a_{j,\,t}f(x),$$
where
$$\mathscr M^\a_{j,\,t}f(x)=\int e^{2\pi ix\cdot\xi}\varphi_j(t\xi)\^m_\a(t\xi)\^f(\xi)d\xi,$$
with $\varphi_j(\xi)$ defined at the beginning of the proof.
Thus it suffices to see for $\a$ and $p$ satisfying \eqref{eq:2.11*},
there exists some $\mu<0$ such that
\beq\label{eq:scaled-reduce-1}
\Bigl\|\sup_{t>0}|\mathscr M^\a_{j,\,t}f(x)|\Bigr\|_{L^p(\R^n)}\leq C\cdot 2^{j\mu}\|f\|_{L^p}.
\eeq
If $j>0$, we may now reduce \eqref{eq:scaled-reduce-1} to \eqref{eq:truncated}. Denote by $\^{\dot\D_\ell f}(\xi)=\varphi(2^{-\ell}|\xi|)\^f(\xi)$ for all $\ell\in \Z$ and notice that we have
$$
\mathscr M^\a_{j,\,t}f(x)=\sum_{|\ell|\leq 100}\mathscr M^\a_{j,\,t}(\dot\D_{j+k+\ell}f)(x)\,,
$$
whenever $2^{-k}\leq t\leq 2^{-k+1}$ for $k\in \Z$, and the following pointwise estimation
$$
\sup_{t>0}|\mathscr M^\a_{j,\,t}f(x)|\leq\Bigl( \sum_{k\in \Z}\sup_{2^{-k}\leq t\leq 2^{-k+1}}|\mathscr M^\a_{j,\,t}f(x)|^p\Bigr)^\frac1p.
$$
Now, we claim that
\beq\label{eq:scaled-reduce-2}
 \Bigl\|\sup_{t\in
[2^{-k},2^{-k+1}]}|\mathscr M^\a_{j,\,t}f(x)|\Bigr\|_{p} \leq C\cdot
2^{\mu j}\|f\|_p,\;\forall \,k\in \Z\,.
\eeq
Under this claim, the left side of
\eqref{eq:scaled-reduce-1} can be estimated as
\begin{align*}
&\Bigl(\sum_{k\in \Z}\Bigl\|\sup_{t\in [2^{-k},2^{-k+1}]}\mathscr M^\a_{j,\,t}\Bigl(\sum_{|\ell|\leq 100}\dot\D_{j+k+\ell}f\Bigr)(x)\Bigr\|^p_{p}\Bigr)^\frac1p\\
\leq &C\cdot2^{\mu j}\Bigl(\sum_{k\in \Z}\Bigl\|\sum_{|\ell|\leq 100}\dot\D_{j+k+\ell}f(x)\Bigr\|^p_{p}\Bigr)^\frac1p\\
\leq &C'\cdot2^{\mu j}\Bigl\|\Bigl( \sum_{k\in
\Z}|\dot\D_{k}f(x)|^2\Bigr)^\frac12\Bigr\|_{p}.\end{align*} Invoking
the standard square-function inequality, we see
 the last term is clearly bounded by $2^{\mu j}\|f\|_p$, where $2<p<\infty$.\V

To show \eqref{eq:scaled-reduce-2}, we start with a standard scaling
to get \beq\label{eq:scaled-reduce-3} \Bigl\|\sup_{t\in
[2^{-k},2^{-k+1}]}\big|\mathscr M^\a_{j,\,t}f(x)\big|\Bigr\|_{p} \leq
\Bigl\|\sup_{t\in [1,2]}\big|\mathscr
M^\a_{j,t}(f_{2^k})(2^kx)\big|\Bigr\|_{p}, \eeq where
$f_{2^k}(x)=f(2^{-k}x)$.
In view of the following formula,
$$
\^{\mathscr M^\a_{j,\,t} f}(\xi)=\^{m}_\a(t\xi)\varphi_j(\xi)\^f(\xi)+\int^t_0\^m_\a(t\xi)
\langle\xi,\nabla\varphi_j(\theta\xi)\rangle\^f(\xi)d\theta,
$$
we have
$$
\sup_{1<t<2}|\mathscr M^\a_{j,\,t} f(x)|\leq\sup_{t\in [1,2]}|\mathscr M^\a_t(\D_jf)(x)|+\int^2_1\sup_{t\in[1,2]}
|\mathscr M^\a_t(\tilde\D^\theta_jf)(x)|d\theta,
$$
where $$\^{\tilde\D^\theta_j f}(\xi):=\langle\xi,\nabla\varphi_j(\theta\xi)\rangle\^f(\xi).$$
The argument in Step 3 works to $\sup_{t\in[1,2]}
|\mathscr M^\a_t(\tilde\D^\theta_jf)(x)|$ as well, yielding an appropriate upper bound independent of $\theta\in [1,2]$.
Therefore \eqref{eq:scaled-reduce-2} follows from Step 3 and rescaling.\V

It remains to handle the case when $j=0$. This is standard as we denote
$\^\Phi(\xi)=\varphi_0(\xi)\^m_\a(\xi)$, then $\int\Phi(x)dx=\^m_\a(0)$ and
$$
\mathscr M_{0,\,t}^\a f(x,t)=\Phi_t*f(x),
$$
where $\Phi_t(x)=t^{-n}\Phi(x t^{-1})$. Hence
$$
\sup_{t>0}\bigl|\mathscr M_{0,\,t}^\a f(x,t)\bigr|\leq C M_{HL}(f)(x),
$$
and the $L^p$ estimate follows.\V
%

Finally, notice that the above arguments also work when $\a\in \mathbb C$ and ${\rm Re}\;\a>-\varepsilon(p,n)$.
We complete the proof of Theorem \ref{thm:main} and this extends Stein's result for $n\geq 2$ and $p> 2$.
\end{proof}
\begin{remark}
If one  asks the same question for general hypersurface rather than standard sphere, for example, we replace $|x|=\sqrt{x^2_1+\cdots x^2_n}$ by another norm
$$\|x\|_s=(x^s_1+\cdots+x^s_n)^{\frac1s}, 0<s<\infty,$$ in the definition of $m_\a(x)$ with $\a\geq 0$,  the above results deduced above
fails to hold.
In fact, it is shown in \cite{IosevichSawyer} that if one considers the surfaces where the Gaussian curvature
is allowed to vanish, the $L^{p}$ exponents for the corresponding  maximal operators are often worse.
We refer to  \cite{IosevichSawyer} for those cases.
\end{remark}

\section{Further discussions}

At the end of this paper, we  discuss on some directions
which might be helpful to further studies.
It seems interesting to study the following problems.\V

(1). Is the relation ${\rm Re}\, \a>-\varepsilon(p,n)$ implied by local smoothing estimate optimal for \eqref{eq:stein} to hold ?
\V

(2). The researches concerning Stein's maximal spherical operators appear in the literature focusing mainly on two aspects.
One is to study the variable coefficient version of the maximal functions when $\a=0$, as can be found in \cite{Sogge1993}.
The other one is to extend the relation between $\a$ and $p$ in Theorem \ref{thm:main} for classical maximal operator \eqref{eq:def-max-spherical}.
A natural question is how to combine these two directions by establishing the variable coefficient version of the analytic family of spherical means.\V

(3). It is well known that Stein's $L^p-L^p$ bounds on maximal spherical means can be generalized to certain $L^q-L^p$ inequalities, see Schlag \cite{Schlag1996,Schlag1997}.
If $\a=0$, there is a variable coefficient version of Schlag's $L^q-L^p$ estimates in \cite{Schlag-Sogge1997}.
The question is whether this is valid for $\a\neq 0$.\V

(4). When $\a=0$, it is shown that $p>\frac n{n-1}$ is necessary for \eqref{eq:stein}. It is interesting to know if the weak type $(p,p)$ estimate holds at the end-point $p=\frac{n}{n-1}$. At this stage, classical Calder\'on-Zygmund decomposition may be useful.\V


(5). It seems also interesting to know whether it is possible to prove the maximal inequality for extended exponents in Theorem \ref{thm:main} without using local smoothing estimate. If this is true, it will be an evidence to support the likely true local smoothing conjecture.
\V



\appendix
\section{}
For the convenience of reading, we give the proof of two
facts cited in Section 1.
\subsection{The proof of \eqref{eq:FT-of-m}}
Using polar
coordinates, we can do the following calculation
\begin{align*}
\^m_{\a}(\xi) =&\int_{\R^n}e^{-2\pi x\cdot\xi}m_{\a}(x)dx\\
=&\frac1{\Gamma(\a)}\int_{\R^n}e^{-2\pi
x\cdot\xi}(1-|x|^2)^{\a-1}_+dx\\
=&\frac1{\Gamma(\a)}\int_0^1(1-\rho^2)^{\a-1}\rho^{n-1}\int_{\mathbb
S^{n-1}}e^{-2\pi \rho w\cdot \xi}d\sigma(w) d\rho\\
=&\frac{2\pi}{\Gamma(\a)}\int_0^1(1-\rho^2)^{\a-1}\rho^{\frac{n-2}2+1}J_{\frac{n-2}2}(2\pi|\xi|\rho)d\rho\\
=&\pi^{-\a+1}|\xi|^{-\frac n2-\a+1}\mathcal J_{\frac
n2+\a-1}(2\pi|\xi|)\,,
\end{align*}
where we have used the following two identities (see Appendix B in
\cite{Grafakos})
\begin{align*}
\int_{\mathbb S^{n-1}}e^{-2\pi \theta\cdot
\xi}d\theta=&\frac{2\pi}{|\xi|^{\frac{n-2}2}}J_{\frac{n-2}2}(2\pi|\xi|)\,,\\
\int_0^1J_{\mu}(ts)s^{\mu+1}(1-s^2)^\nu
ds=&\frac{\Gamma(\nu+1)2^\nu}{t^{\nu+1}}J_{\mu+\nu+1}(t)\,.
\end{align*}

\subsection{The proof of \eqref{eq:sph-wave}}
Here we give a new proof of \eqref{eq:sph-wave}.
It is based on the following fact
$$J_{\frac12}(r)=\frac{\sqrt{r}}{\sqrt{2}\Gamma(1/2)}\int_{-1}^1e^{isr}ds=\frac1{\sqrt{2}\Gamma(1/2)}\frac{\sin(r)}{\sqrt{r}}\,.$$
Letting $\a=\frac{3-n}2$, we obtain
\begin{align*}
t \mathcal{F}_{x\rta \xi}\big(M^\a_t(f)(\cdot)\big)(\xi)=&t\hat
m_\a(t\xi)\hat{f}(\xi)\\
=&t \frac{\pi^{\frac{n-1}2}}{|t\xi|^\frac12}J_\frac12(2\pi|t\xi|)\^f(\xi)\\
=&\tilde c_n\frac{\sin(t\xi)}{|\xi|}\hat{f}(\xi)\\
=&\tilde
c_n\mathcal{F}_{x\rta \xi}\Big(\frac{\sin(t\sqrt{-\Delta})}{\sqrt{-\Delta}}f\Big)(\xi)\,.
\end{align*}
After taking inverse Fourier transform, we have \eqref{eq:sph-wave} solves the Cauchy problem \eqref{eq:1}.

\end{CJK*}

\begin{thebibliography}{99}

\bibitem{Bourgain1985}
J.~Bourgain,
{\em On the spherical maximal function in the plane}.
IHES, 1985.
%
%
%

\bibitem{Bourgain1986c}
 J.~Bourgain,
{\em Averages in the plane over convex curves and maximal operators}. J d'analyse Math. 47. (1986), 69-85.


\bibitem{BourgainDemeter}
J. Bourgain and C. Demeter,
{\em The proof of the $\ell^{2}$ decoupling conjecture.}
 Annals of Math. (2)
182 (2015), no. 1, 351-389.
%
%

%
%
%
%

\bibitem{Garrigos-Schlag-Seeger}
G.~Garrig\'os, W.~Schlag and A.~Seeger, {\em Improvements in Wolff's
inequality for decompositions of cone multipliers.}
Unpublished preprint, available online (2008).

\bibitem{Grafakos} L. Grafakos, {\em Classical Fourier Analysis.}
Second Edition, Springer, 2008.

\bibitem{IosevichSawyer}
A. Iosevich and E. Sawyer,
{\em
Maximal averages over surfaces.}
Adv. Math. 132 (1997), no.1, 46-119

\bibitem{Kolasa-Wolff1999}
L.~Kolasa and T.~Wolff,
{\em On some variants of the Kakeya problem.}
Pacific. J. Math. 190 (1999),111-154.

\bibitem{LabaWolff2002}
I. {\L}aba and T. Wolff, {\em A local smoothing estimate in higer
dimensions}. J. d'analyse Math., 88(2002), 149-171.

\bibitem{MoSSI}
G. Mockenhaupt, A. Seeger and C. D. Sogge, {\em Wavefront sets,
local smoothing and Bourgain's circular maximal theorem.} Ann.
Math.,136 (1992), 207-218.

\bibitem{MoSSII}
G. Mockenhaupt, A. Seeger and C. D. Sogge, {\em Local smoothing of
Fourier integral operators and Carleson-Sj\"{o}lin estimates}. J.
Amer. Math. Soc., 6(1993), 65-130.
%
%


%

\bibitem{Schlag1996}
W. Schlag,
\newblock{$L^p-L^q$ estimates for the circular maximal function,} Ph.D. Thesis. Californian Institute of Technology,1996.

\bibitem{Schlag1997}
W. Schlag,
\newblock{ A generalization of Bourgain's circular maximal theorem}. Journal Amer. Math. Soc. 10(1997), 103-122.

\bibitem{Schlag-Sogge1997}
W. Schlag and C. D. Sogge
\newblock{\em Local smoothing estimates related to the circular maximal theorem}. Math. Reaserch Letters, 4(1997),
1-15.

\bibitem{Sogge1991}
C. D. Sogge, {\em Propogation of singularity and maximal functions
in the plane.} Invent. Math., 104(1991), 349-376.

\bibitem{Sogge1993}
C. D. Sogge,
\newblock{\em Fourier integrals in classical analysis.}
Cambridge Univ. Press,1993.



\bibitem{Stein1976}
 E. M. Stein,
{\em Maximal functions; spherical means.} Proc. Nat. Acad. Sc.
U.S.A., 73 (1976), 2174-2175.


%

%

\bibitem{Stein1993}
 E. M. Stein,
{\em Harmonic analysis, real vairable methods, orthogonality and
oscillatory integrals}. Princeton Univ. Press (1993).


\bibitem{Stein-Weiss1971}
 E. M. Stein and G. Weiss
{\em Introduction to Fourier analysis on Euclidean spaces.} (1971),
Princeton Univ. Press. Princeton, N.J.


\bibitem{Wat} G. N. Watson, {\em A treatise on the theory of Bessel
function.} Second Edition, Cambridge University Press, 1944.

\bibitem{Wolff2000b}
T. Wolff, {\em Local smoothing type estimates on  $L^p$ for large
$p$}. Geom. Funct. Anal., 10 (2000), 1237-1288.

\end{thebibliography}
\end{document}